\newcommand{\We}{\mathop{\mathcal{W}^n}}
\newcommand{\g}{\mathfrak{g}}
\newcommand{\Ch}{\mathop{\mathrm{Ch}^\bullet}}
\newcommand{\Chq}{\mathop{\mathrm{Ch}_h^\bullet}}
\newcommand{\obl}{\mathop\mathrm{obl}}
\renewcommand{\k}{\Bbbk}
\newcommand{\Tor}{\mathop{\mathrm{Tor}}}
\newcommand{\Ext}{\mathop{\mathrm{Ext}}}
\newcommand{\End}{\mathop{\mathrm{End}}}
\newcommand{\Ad}{\mathrm{Ad}}
\newcommand{\gd}{\g^\vee}
\newcommand{\Sgd}{\k[[\gd]]}
\newcommand{\F}{\mathcal{L}}
\newcommand{\id}{\mathrm{id}}
\newcommand{\Tr}{\mathop\mathrm{Tr}}
\newcommand{\tens}[1]{\mathbin{\mathop{\otimes}\limits_{#1}}}
\renewcommand{\j}{j}
\theoremstyle{plain}
\newtheorem{prop}{Proposition}
\newtheorem{cor}{Corollary}
\theoremstyle{definition}
\newtheorem{definition}{Definition}
\newtheorem{example}{Example}
\newtheorem*{construction}{The construction}
\newtheorem*{ackn}{Acknowledgments}
\theoremstyle{remark}
\newtheorem{remark}{Remark}
\title{Weyl $n$-algebras and the Kontsevich integral of the unknot}
\author{Nikita Markarian}
\date{}
\address{National Research University Higher School of Economics, Russian
 Federation,
Department of Mathematics, 20 Myasnitskaya str., 101000, Moscow,
Russia}
\email{nikita.markarian@gmail.com}
\thanks{I was partially supported by a subsidy granted to the HSE by the
 Government of the Russian Federation for the implementation
        of the Global Competitiveness Program and by   RFBR Grant NN.  15-01-09242.
}
\begin{document}

\begin{abstract}
Given a Lie algebra with a scalar product, one may consider the latter
as a symplectic structure on a $dg$-scheme, which is the spectrum
of the Chevalley--Eilenberg algebra. In the first section we explicitly 
calculate the first order deformation of the differential on the
Hochschild complex of the Chevalley--Eilenberg algebra. The answer 
contains the Duflo character.  This calculation is used in the last section.
There we sketch the definition
of the Wilson loop invariant of knots, which is, hopefully, equal to the Kontsevich integral,
and show that for unknot they coincide.
As a byproduct, we get a new proof of the Duflo isomorphism for
a Lie algebra with a scalar product.
\end{abstract}

\maketitle

\section*{Introduction}

In \cite{W} we built perturbative Chern--Simons invariants by means
of the factorization complex of Weyl $n$-algebras. In the present paper
we continue this line and introduce the Wilson loop invariant.
This invariant is supposed to be equal to the Bott--Taubes invariant 
and the Kontsevich integral. 
In fact, we are only interested in one question here:
calculating the Wilson loop invariant of unknot in $S^3$.
This problem appears to be connected with the Duflo isomorphism. 

We consider the  Duflo isomorphism for Lie algebras with a scalar
product, which is much simpler to prove than the general statement from \cite{Du}.
There are (at least) two proofs of the Duflo isomorphism for a Lie algebra
with a scalar product. In \cite{AM} the authors use
a quantization of the Weil algebra. In \cite{BN} the Kontsevich integral
of knots and link is used. 
Our sketch of a proof 
(see remark before Proposition \ref{superduflo}) is related to the both.
The work \cite{Kr} also connects these two approaches and it would be very interesting to
compare it with our arguments.
 
The first section is not strongly connected with the rest of the paper,
but is of independent interest.
Here we make a very concrete calculation of
the first order deformation  of the Hochschild complex 
for the Chevalley--Eilenberg algebra of a Lie algebra.
The deformation is given by the scalar product.
This calculation is closely connected with \cite{M1} and may be rephrased in the style of this paper,
see Remark \ref{remark}.

In the second section we give a very short survey of results about $e_n$-algebras
and the factorization complex we need. For basics we refer the reader to \cite{Lu}
and for a much more detailed survey than ours we refer to \cite{G}.
At the end of the section we describe a construction, which plays a crucial role in the next section.

In the third section we apply this construction to the quantum Chevalley--Eilenberg algebra,
the role of which for perturbative Chern--Simons invariants is explained in
\cite[Appendix]{W}. The central result here is Proposition \ref{superduflo}.
The calculation we make here strongly reminds the one from the first section.
I would like to understand better reasons of this  similarity.
This section must be considered as an announcement. It contains no proofs.

Everything is over a field $\k$ of characteristic $0$.

\begin{ackn} I am grateful to 
B.~Feigin,  O.~Gwilliam, D.~Kaledin, A.~Kalugin, M.~Karev and A.~Khoroshkin  for fruitful discussions. 
I warmly thank the referee for providing constructive comments and help in improving the contents of this paper.
I would  like to express my deepest gratitude to Sergei Duzhin for his kindness and sensibility.
\end{ackn}

\section{Quantization of  the Chevalley--Eilenberg complex}

\subsection{Hochschild homology of the Chevalley--Eilenberg complex}

Let $\g$ be a finite-dimensional Lie algebra. The Chevalley--Eilenberg algebra $\Ch(\g)$
is a super-commutative $dg$-algebra $S^*(\gd[1])$ generated by the dual space $\gd$ placed in degree $1$.
The differential is a derivation of this free super-commutative algebra defined on the generators
by the tensor $\gd \to \gd \wedge \gd $ dual to the bracket. The Jacobi identity guarantees that this is indeed,
a differential. In terms of \cite{AKSZ} the Chevalley--Eilenberg algebra may be thought of as
the function ring of a $Q$-manifold.

With any $\g$-module $E$ one may associate the module $\Ch(\g,E)$ over $\Ch(\g)$ as follows.
As a $S^*(\gd[1])$-module it is freely generated by $E$ and the differential
is defined by its value on $E\otimes 1$ given by the tensor
$E\to E\otimes\gd$ of the $\g$-action. As a complex, $\Ch(\g,E)$
calculates the cohomology of $\g$ with coefficients in $E$.

The $\Ch(\g)$-module  $\Ch(\g,\gd_{ad})$ corresponding to the adjoint $\g$-module
may be thought of as a cotangent complex of $\Ch(\g)$.
The de Rham differential $d_{dR}\colon \Ch(\g)\to \Ch(\g,\gd_{ad})$,
which is a derivation of $\Ch(\g)$-modules, is tautologically defined on the generators.  
Define the  $\Ch(\g)$-module of differential forms 
of $\Ch(\g)$ as $\Ch(\g,\Sgd^{ad})$. 
It is a super-commutative algebra and 
the de Rham differential acts on it in the usual way, it is a derivation.

For a  unital $dg$-algebra $A$ define
the reduced (or normalized) Hochschild complex 
$C_*(A)$ (see e.~g. \cite[Ch 1.1]{L}) as
the total complex of the bi-complex with the $(-i)$-th term
\begin{equation}
\prod_{i\ge 0} (A\otimes \underbrace{A/\k\otimes\cdots\otimes A/\k}_i),
\label{1}
\end{equation}
the first  differential coming from $A$ and the second differential given by
\begin{align}
 &a_0\otimes a_1\otimes a_2\otimes\cdots\otimes a_i
\mapsto\nonumber\\ 
&a_0 a_1\otimes a_2\otimes\cdots\otimes a_i-
a_0\otimes a_1 a_2\otimes\cdots\otimes a_i+\dots\nonumber\\
&+(-1)^{i+\deg a_i(\deg a_0+\dots \deg a_{i-1})}a_i a_0\otimes a_1\otimes\cdots\otimes a_{i-1}.
\label{2}
\end{align}
Here one have to choose representatives of quotients $A/\k$,
then apply formula and take quotients again,  the result
does not depend on choices.
Note, that the usual  definition uses direct sums instead of products,
but we need the one we gave. In other words, we shall consider unbounded chains, that is
the graded completion (\cite[Definition A.25]{CD}) of $\sum_{i\ge 0} (A\otimes \underbrace{A/\k\otimes\cdots\otimes A/\k}_i)$ with respect to the grading given by the grading  on $A$.
For an ungraded algebra the reduced Hochschild complex calculates $\Tor_*^{A\otimes A^o}(A, A)$.

The following proposition is a variant of the
Hochschild--Kostant--Rosenberg isomorphism. 
\begin{prop}
	The formula
	\begin{equation}
	a_0 \otimes a_1\otimes\cdots\otimes a_i
	\mapsto a_0\,d_{dR}a_1\cdots \,d_{dR}a_i
	\label{hkr}
	\end{equation}
	defines a morphism 
	from the reduced Hochschild complex $C_*(\Ch(\g))$ of the Chevalley--Eilenberg algebra 
	to its  
	differential forms $\Ch(\g,\Sgd^{ad})$. This  morphism is a quasi-iso\-mor\-phism.
	\label{HKR}
\end{prop}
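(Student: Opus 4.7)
The plan is the standard HKR pattern: verify that \eqref{hkr} is a well-defined chain map, then deduce the quasi-isomorphism by a spectral sequence argument that reduces to the classical super-HKR for a free super-commutative algebra. Well-definedness on the reduced complex is immediate since $d_{dR} 1 = 0$. For compatibility with the bar differential \eqref{2}, one uses that $d_{dR}$ is a graded derivation: each multiplication $a_j a_{j+1}$ yields two form-terms that telescope with the neighboring entries, and the cyclic contribution matches precisely because of the super-commutativity of $\Ch(\g)$ together with the signs prescribed in \eqref{2}. Compatibility with the internal $\Ch(\g)$-differential is tautological once one extends it through the $d_{dR} a_j$ by the Leibniz rule, giving on both sides the CE differential with values in $\Sgd^{ad}$.

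For the quasi-isomorphism I would first forget the CE differential. Then $\Ch(\g) = S^*(\gd[1])$ is a free graded super-commutative $\k$-algebra, and \eqref{hkr} is the classical HKR comparison map with values in the module of Kähler differentials $S^*(\gd[1])\otimes S^*(\gd)$. For such a free algebra this is a direct Koszul computation: the Koszul resolution of the diagonal in $S^*(\gd[1]) \otimes S^*(\gd[1])^o$ identifies the normalized, product-completed bar complex with the module of forms, and the use of products in \eqref{1} is precisely what matches the completion $\Sgd = \k[[\gd]]$ on the forms side. Then I would switch the CE differential back on and filter both sides by the total symmetric-algebra degree in the $S^*(\gd[1])$-variables: the CE differential strictly lowers this degree while \eqref{2} and $d_{dR}$ preserve it, so the associated graded of the map is exactly the differential-free HKR map from the previous step, hence a quasi-isomorphism.

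The main obstacle is the completion bookkeeping: the filtration by $S^*(\gd[1])$-degree must be complete and exhaustive for the spectral sequence to converge strongly, and one must confirm that the product convention in \eqref{1} agrees with the completion in $\Sgd$ on the forms side so that the two associated gradeds match term by term. Both issues are resolved by the graded-completion convention of \cite[Definition A.25]{CD} invoked in the definition of $C_*(\Ch(\g))$; with it in place the comparison theorem for complete filtered complexes lifts the associated-graded quasi-isomorphism to the desired quasi-isomorphism on the nose.
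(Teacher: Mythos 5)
Your proposal is correct in substance, but it runs on a different filtration than the paper's, and the difference matters. The paper (via Proposition \ref{tor}) filters $C_*(\Ch(\g))$ by $\deg a_0$ alone; since $\Ch(\g)$ is a finite-dimensional exterior algebra, this filtration is \emph{finite}, so convergence is automatic; the associated graded is $\Ch(\g)$ tensored with the reduced bar complex computing $\Tor_*^{\Ch(\g)}(\k,\k)\cong\Sgd$, the differential $d_1$ is identified with the Chevalley--Eilenberg differential with coefficients in $\Sgd^{ad}$ by comparing, through \eqref{hkr}, with the stupid filtration on the target, and the sequence degenerates at the second sheet for dimensional reasons --- an identification the paper reuses to obtain Corollary \ref{cor}. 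You instead filter by total word length in the generators, which switches off the CE differential on the associated graded and reduces everything to the classical HKR/Koszul computation for the free super-commutative algebra $S^*(\gd[1])$; this is legitimate, and you correctly identify the real cost: your filtration is unbounded in each total degree (for a chain in $i$ bar slots the word length equals the total degree plus $i$, and the complex is a product over all $i$), so, unlike with the paper's filtration, you genuinely need completeness and a comparison lemma for complete filtered complexes (a Milnor-sequence/Boardman-type argument), which the graded-completion convention of \cite[Definition A.25]{CD} does supply. Two small corrections: the CE differential \emph{raises} the word length by one (a generator maps into $\gd\wedge\gd$), it does not lower it --- harmless, since all you need is that it shifts the filtration strictly in one fixed direction while \eqref{2} and $d_{dR}$ preserve it; and on the target the filtration must count the letters of $\Sgd$ together with the exterior letters, because \eqref{hkr} converts slot letters into $d_{dR}$-letters, so filtering by the exterior degree alone would not make \eqref{hkr} a filtered map. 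With these adjustments your argument goes through: the paper's choice buys a finite filtration with no convergence bookkeeping plus the explicit $E_1$-identification, while yours buys a cleaner, more self-contained reduction to the differential-free free-algebra case.
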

\begin{proof}
	Direct calculation shows that this is a morphism.
	The proof of Proposition \ref{tor} implies that this is a 
	quasi-isomorphism.	
\end{proof}

Equip $C_*(\Ch(\g))$ with a descending filtration $F$: the subcomplex
$F_kC_*(\Ch(\g))$ is spanned by chains $a_0\otimes a_1\otimes\cdots\otimes a_i$
such that $\deg a_0\ge k$.

\begin{prop}
	The spectral sequence associated with the filtration $F$ on $C_*(\Ch(\g))$
	degenerates at  the second sheet. The complex $E_1^{p,0}$ is isomorphic
	to $\Ch(\g,\Sgd^{ad})$ and $E_1^{p,>0}=0$. 
\label{tor}
\end{prop}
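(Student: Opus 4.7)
The plan is to identify the $E_0$ and $E_1$ pages explicitly, and then to obtain degeneration essentially for free from the shape of $E_1$. First, I would sort the terms of the Hochschild differential by how they interact with the filtration $F$. Since every $a_j$ with $j\ge 1$ lies in the augmentation ideal of $\Ch(\g)$ and hence has strictly positive degree, the ``outer'' terms $a_0 a_1\otimes\cdots$ and $\pm a_i a_0\otimes\cdots$ strictly raise $\deg a_0$, as does $d_{CE}$ applied to $a_0$. What survives on $E_0$ is $d_{CE}$ acting on $a_1,\ldots,a_i$ together with the middle bar terms $\sum_{j=1}^{i-1}(-1)^j\, a_0\otimes\cdots\otimes a_j a_{j+1}\otimes\cdots\otimes a_i$. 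Treating $a_0$ as a passive coefficient, this gives a natural identification
\[
E_0^p \;\cong\; \Ch(\g)^p \;\otimes\; \bar B(\Ch(\g)),
\]
where $\bar B(\Ch(\g))$ denotes the graded-completed reduced bar complex of the augmented $dg$ algebra $\Ch(\g)$.

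The main step is to compute $H^\bullet(\bar B(\Ch(\g)))$. I would appeal to Koszul duality between $\Ch(\g)$ and $U(\g)$: the reduced bar complex computes $\Tor^{\Ch(\g)}(\k,\k)$, which is concentrated in total cohomological degree zero and is canonically isomorphic to $U(\g)\cong S(\g)$ via PBW; after the graded completion already built into (1), this is $\Sgd$. Concretely, a quasi-isomorphism $\bar B(\Ch(\g))\to\Sgd$ is furnished by the symmetrization $a_1\otimes\cdots\otimes a_i\mapsto \tfrac{1}{i!}\, da_1\cdots da_i$ on degree-one generators, extended by a standard Koszul contracting homotopy. This yields $E_1^{p,0}\cong\Ch(\g)^p\otimes\Sgd$ and, crucially, $E_1^{p,q}=0$ for $q>0$. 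I expect this step to be the main obstacle: one has to track the $d_{CE}$-differential and the bar differential on $\bar B(\Ch(\g))$ simultaneously and check that the resulting cohomology is concentrated in a single row.

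Finally, I would identify $d_1$ and conclude degeneration. The pieces of the Hochschild differential that raise the filtration by exactly one --- namely $d_{CE}$ on $a_0$ together with the outer terms $a_0 a_1\otimes\cdots$ and $\pm a_i a_0\otimes\cdots$ --- assemble, under the identification above, into the CE differential on $\Ch(\g,\Sgd^{ad})$ with its adjoint action: the $d_{CE}(a_0)$-piece contributes the differential on the $\Ch(\g)$ tensor factor, while the outer terms produce the $\g$-action on $\Sgd$. Degeneration at $E_2$ is then automatic: since $E_1^{p,q}=0$ for $q\neq 0$, every higher differential $d_r\colon E_r^{p,0}\to E_r^{p+r,1-r}$ with $r\ge 2$ has target in a row that already vanishes on $E_1$, so $d_r=0$ for all $r\geq 2$.
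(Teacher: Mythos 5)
Your identification of the $E_0$-page and the computation of $E_1$ follow the paper's route: the paper also identifies the associated graded with $\Ch(\g)$ tensored with the (completed, normalized) bar complex, and computes its cohomology not by citing Koszul duality but by the auxiliary spectral sequence whose first differential is the bar differential (\ref{stc}), which degenerates trivially because the bar cohomology of the free supercommutative algebra on $\gd[1]$ is $\Sgd$ concentrated in degree $0$; your degeneration argument is exactly the paper's ``dimensional reasons''. One slip in your duality bookkeeping: $\Tor^{\Ch(\g)}(\k,\k)$ is not ``$U_\g\cong S(\g)$ via PBW'' --- it is $\Ext^*_{\Ch(\g)}(\k,\k)$ that is $U_\g$, and the (completed) Tor is its dual, i.e.\ formal functions on the formal group, identified with $\Sgd$ by exponential coordinates; the paper makes this precise in the proof of Proposition \ref{atiyah}. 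This is harmless for the present statement, where only ``$\Sgd$ in degree $0$'' is used.

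The genuine gap is in your identification of $d_1$, where you depart from the paper. The assertion that ``the outer terms produce the $\g$-action on $\Sgd$'' fails on the representatives you propose, for two reasons. First, when $d_{CE}\neq 0$ the symmetrized words $a_0\otimes\mathrm{sym}(x_1,\dots,x_i)$ are \emph{not} cycles on $E_0$: the inner bar terms do vanish on them, but the letterwise Chevalley--Eilenberg terms produce words with a degree-$2$ letter $d_{CE}x_j$, so the representatives must first be corrected by higher terms before $d_1$ can be evaluated. Second, on the uncorrected symmetrized words the two outer terms cancel identically: with all $a_{>0}$ of degree $1$, the sign in the last term of (\ref{2}) is $(-1)^{i+\deg a_0+(i-1)}=-(-1)^{\deg a_0}$, and commuting $a_i$ past $a_0$ turns that term into \emph{minus} a cyclic shift of the first outer term, so the sum over the symmetrization is zero. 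Your mechanism as stated would therefore output $d_1=d_{CE}(a_0)\otimes\id$, i.e.\ $\Ch(\g,\Sgd)$ with \emph{trivial} coefficients, which is wrong. The adjoint-action term arises entirely from the corrections: each outer map (\ref{at}) separately induces the left/right invariant vector-field series (\ref{formula}), with its nontrivial Bernoulli tail (this is Proposition \ref{atiyah}), and only in the difference does everything collapse to $\Ad$. The paper deliberately sidesteps this computation: it notes that (\ref{hkr}) is a map of filtered complexes (stupid filtration on $\Ch(\g,\Sgd^{ad})$) inducing an isomorphism on first sheets, whence the first differentials coincide --- the only input being the elementary check that (\ref{hkr}) is a chain map. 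To repair your argument, either adopt that comparison, or carry out the homological perturbation of the symmetrized representatives honestly, which amounts to reproving Proposition \ref{atiyah}.
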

\begin{proof}
The associated graded object to the filtration $F$ is the tensor product
of $S^*(\gd[1])$ and the normalized standard complex, which calculates the
homology of algebra $\Ch(\g)$ with coefficients in the augmentation module.
More precisely, the latter complex is the total complex of 
the bicomplex, which is the direct product
$\prod_i(\Ch(\g)/\k)^{\otimes i}$, and with the second differential defined on $a_1\otimes\cdots\otimes a_i$
by
\begin{equation}
a_1\cdot a_2\otimes\cdots\otimes a_i-a_1\otimes a_2\cdot a_3\otimes\cdots\otimes a_i+\dots\pm 
a_1\otimes\cdots\otimes a_{i-1}\cdot a_i,
\label{stc}
\end{equation}
where $a_i$ are elements of the augmentation ideal, which is identified
with $\Ch(\g)/\k$. To compute its cohomology consider
the spectral sequence associated with the above mentioned bicomplex with the first differential
(\ref{stc}). It degenerates at the first sheet for trivial reasons and equals
$\Sgd$ sitting in degree $0$.

Equip $\Ch(\g,\Sgd^{ad})$ with the stupid filtration (e.~g. \cite[III.7.5]{GM})
and
consider  the map (\ref{hkr}) of filtered 
complexes. In the light of the above, the associated map of spectral sequences
gives an isomorphism on the first sheet. It follows that the first differentials also 
coincide. Thus the first differential of our spectral sequence is  as stated
and the higher differentials vanish for dimensional reasons.
\end{proof}

Note, that $F_iC_i(\Ch(\g))$ is spanned by chains $a_0\otimes a_1\otimes\cdots\otimes a_i$
such that $\deg a_{>0}=1$. Taking into account Proposition \ref{tor}
we get the following.

\begin{cor}
	Every cycle in $C_*(\Ch(\g))$ may be presented by a sum of chains 
	$a_0\otimes a_1\otimes\cdots\otimes a_i$
	with $\deg a_{>0}=1$.  
\label{cor}
\end{cor}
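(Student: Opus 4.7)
The plan is to use the spectral sequence of Proposition \ref{tor} to push any cycle down to the deepest nonzero stratum of the filtration $F$, where by construction it will have the required form.

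First I would verify two elementary facts about the filtration $F$. Since the paper places the $s$-tensor-factor term in Hochschild position $-s$, a chain $a_0\otimes a_1\otimes\cdots\otimes a_s$ has total degree $\sum_{j=0}^s\deg a_j - s$. As each $a_j$ with $j>0$ represents a class in $\Ch(\g)/\k$ and hence has degree at least $1$, the condition $\deg a_0\ge k$ combined with total degree equal to $n$ forces $k\le n$. In particular $F_kC_n=0$ for $k>n$, so the filtration is finite in each total degree, and $F_nC_n$ consists precisely of chains with $\deg a_0=n$ and $\deg a_j=1$ for all $j>0$, which is exactly the shape demanded by the corollary.

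Now let $c\in C_n(\Ch(\g))$ be a cycle and let $k$ be the largest index with $c\in F_kC_n$. If $k=n$ we are done. Otherwise, the image $\bar c\in\mathrm{gr}^kC_n$ is a nonzero $d_0$-cycle; in the associated spectral sequence it represents a class in the $E_1$-term at Hochschild position $n-k>0$. By Proposition \ref{tor} this group vanishes, so $\bar c=d_0\bar b$ for some $\bar b\in\mathrm{gr}^kC_{n-1}$. Lifting $\bar b$ to any $b\in F_kC_{n-1}$, the cycle $c-db$ lies in $F_{k+1}C_n$. Iterating at most $n-k$ times produces a cycle homologous to $c$ inside $F_nC_n$, which by the previous paragraph has the required form. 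The argument is entirely routine once Proposition \ref{tor} is in hand; the only point to double-check is the finiteness of the filtration on each $C_n$, which is what ensures the iteration terminates.
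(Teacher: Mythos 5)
Your proof is correct and takes essentially the same route as the paper: the remark preceding the corollary is exactly your identification of $F_nC_n$ with the chains having $\deg a_{>0}=1$, and ``taking into account Proposition \ref{tor}'' is shorthand for the standard filtration zig-zag you spell out, using $E_1^{p,>0}=0$ together with the finiteness of the filtration in each total degree (which you rightly flag as the point to check, given that the complex is a direct product over the number of bar factors). Note that, as your write-up makes explicit, only the $E_1$-vanishing from Proposition \ref{tor} is used, not the degeneration at the second sheet.
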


Finding  an explicit formula for these cycles  seems to be an interesting question.

\subsection{Invariant vector fields}

Along with the Hochschild complex as above one may consider  
the Hochschild complex $C_*(A,M)$ of a $dg$-algebra  $A$
with coefficients in a $A$-bimodule $M$ (see e.~g. \cite[Ch 1.1]{L}). It is given
by the same formulas (\ref{1}) and (\ref{2}), but 
$a_0$ now is an element of $M$. 
For a ungraded algebra the reduced Hochschild complex calculates $\Tor_*^{A\otimes A^o}(A, M)$.

The $\Ch(\g)$-module of 1-forms $\Ch(\g, \gd)$
is a bimodule as well, because the algebra is supercommutative.
Introduce the Hochschild complex of $\Ch(\g)$
with coefficients in this bimodule $C_*(\Ch(\g), \Ch(\g, \gd))$.

\begin{prop}
    The formulas
	\begin{align}
	\begin{split}
	a_0\otimes a_1\otimes\cdots\otimes a_i &\mapsto a_0\,d_{dR}a_1 \otimes a_2\otimes\cdots\otimes a_i\\
	a_0\otimes a_1\otimes\cdots\otimes a_i &\mapsto \pm a_0\,d_{dR}a_i\otimes a_1\otimes\cdots\otimes a_{i-1},
	\label{at}
	\end{split}
	\end{align}	
	where the sign is defined by the Koszul rule,
	define   morphisms from the Hochschild complex $C_*(\g)$ to the Hochschild complex with coefficients 
	$C_*(\Ch(\g), \Ch(\g, \gd))$ of degree 1.
\end{prop}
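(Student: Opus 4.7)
The plan is to verify directly that each of the two formulas in (\ref{at}) anticommutes with the total Hochschild differential, so that each defines a chain map of degree $1$. Denote the first map by $\alpha$ and the second by $\beta$. Both are manifestly well-defined modulo the quotient $\Ch(\g)/\k$ in each slot since $d_{dR}$ kills constants. The Hochschild differential on either side splits into the internal piece inherited from $\Ch(\g)$ (and from the coefficient bimodule on the target) and the bar piece given by (\ref{2}); I would check compatibility with each separately.

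Compatibility with the internal differential is immediate, because $d_{dR}$ is itself a chain map: it is a derivation of $\Ch(\g)$-modules whose value on the generators is tautological, hence it commutes with the Chevalley--Eilenberg differential. Replacing $a_1$ (for $\alpha$) or $a_i$ (for $\beta$) by its de Rham differential therefore commutes, up to the usual Koszul signs, with applying the internal differential slot by slot.

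The content of the proposition lies in compatibility with the bar differential. For $\alpha$ I would apply the bar differential to the image $a_0\, d_{dR}a_1 \otimes a_2 \otimes \cdots \otimes a_i$ and separately apply $\alpha$ to the bar differential of the source, then match the two expressions term by term. All terms that do not touch the first two slots pair off by inspection. The essential cancellation uses the Leibniz rule $d_{dR}(a_1 a_2) = d_{dR}(a_1)\, a_2 \pm a_1\, d_{dR}(a_2)$: the merge $(a_0\, d_{dR}a_1)\cdot a_2 \otimes a_3 \otimes \cdots$ on the target side kills the first Leibniz piece of $\alpha(-a_0 \otimes a_1 a_2 \otimes \cdots)$, while the remaining Leibniz piece cancels $\alpha(a_0 a_1 \otimes a_2 \otimes \cdots) = a_0 a_1\, d_{dR}a_2 \otimes \cdots$ coming from the first merge of the source bar differential. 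The analysis of $\beta$ is parallel, but now the cyclic (last) term of (\ref{2}) plays the role of the first merge, because $d_{dR}$ is applied to the last slot.

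The main obstacle is the Koszul sign bookkeeping, especially for $\beta$, where the cyclic sign of (\ref{2}) interacts with the permutation that carries $a_i$ past $a_1, \dots, a_{i-1}$; pinning down this sign is precisely what the $\pm$ in (\ref{at}) must absorb. Conceptually, both maps can be viewed as cap products of Hochschild chains against the Hochschild $1$-cocycle $d_{dR}\colon \Ch(\g)\to \Ch(\g, \gd)$ (a cocycle precisely because it is a derivation into a symmetric bimodule), so their being closed of degree $1$ is a formal consequence of the general fact that cap product with a cocycle is a chain map.
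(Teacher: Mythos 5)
Your proposal is correct and matches the paper's approach: the paper's entire proof is the one-line assertion ``This is a direct calculation,'' and you carry out precisely that verification, splitting the differential into its internal and bar parts and tracking the Leibniz-rule cancellations. Your closing observation that both maps are cap products with the Hochschild $1$-cocycle $d_{dR}$ (a derivation into a symmetric bimodule), so that closedness is formal, is a sound conceptual packaging of the same computation and goes slightly beyond what the paper records.
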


\begin{proof}
	This is a direct calculation.
\end{proof}

The following proposition describes these morphisms 
in terms of the quasi-iso\-mor\-phism (\ref{HKR}).

Recall some basic facts from Lie group theory. For a finite-dimensional
Lie algebra $\g$ denote by $U_\g$ its enveloping algebra.
This is a Hopf algebra which is dual to the Hopf algebra of formal functions
$F(G)$ on the formal group associated with $\g$. The Poincar\'e--Birkhoff--Witt
map from the symmetric power of $\g$
to its universal enveloping $i_{PBW}\colon S^*\g \to U_\g$ provides an isomorphism between 
them as adjoint $\g$-modules.
It is dual to the exponential coordinate map $exp^*\colon F(G)\to \Sgd $.
Maps
\begin{equation}
\F_L\colon F(G)\to F(G)\otimes \gd \quad\mbox{and}\quad \F_R\colon F(G)\to F(G)\otimes \gd 
\label{lr}
\end{equation}
dual to the multiplications
$$
U_\g\otimes\g\to U_\g \quad\mbox{and}\quad \g \otimes U_\g\to U_\g
$$
respectively. 
After identifying $G$ and $\g$ by the exponential map,
the maps (\ref{lr}) are given by elements of $Vect(\g)\otimes\gd$.
Corresponding maps from $\g$ to $Vect(\g)$ are given by left and right invariant vector fields on $G$.
Applying the constant trivialization of the tangent bundle to $\g$
one may identify such a tensor with a section of the trivial
vector bundle with fiber $\End(\g)$ over $\g$.   
In other words, this section is the transformation matrix between 
the constant basis of the tangent bundle and the one given by left (right) invariant vector fields.
By e.~g. \cite[Ch. 3.4]{R} they are given by formulas
\begin{equation}
	\id \pm\frac{1}{2}\Ad +\sum_{n\ge 1}\frac{B_{2n}}{(2n)!} \Ad^{2n}
\label{formula}
\end{equation}
("$+$" for the first and "$-$" for the second tensor),
where $\Ad$ is the structure tensor of the $\g$
 considered as linear function on $\g$ taking values in $\End(\g)$
and $B_n$ are Bernoulli numbers:
\begin{equation}
	\sum_{n\ge 0} \frac{B_n}{n!} z^n=\frac{z}{e^z-1}.
\label{bernoulli}
\end{equation}

Recall that Proposition \ref{HKR} identifies $C_*(\Ch(\g))$ with the complex 
$\Ch(\g, \Sgd^{ad})$.
In the same way, one can build a quasi-isomorphism
between $C_*(\Ch(\g), \Ch(\g, \gd))$ and $\Ch(\g, \gd\otimes \Sgd^{ad})$.  

\begin{prop}
Under the quasi-isomorphism as above, maps (\ref{at}) 
$$
\Ch(\g, \Sgd^{ad})\to \Ch(\g, \gd\otimes \Sgd^{ad})
$$
are induced by
(\ref{lr}), where $\Sgd$
is identified with $F(G)$ by the exponential map; 
that is, (\ref{at}) are given by formulas (\ref{formula}).
\label{atiyah}	
\end{prop}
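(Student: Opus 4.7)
The plan is to reduce to an explicit computation on convenient cocycle representatives. By Corollary \ref{cor}, every class in $C_*(\Ch(\g))$ admits a representative of the form $a_0 \otimes a_1 \otimes \cdots \otimes a_i$ with $a_j \in \gd$ (in total degree $1$) for $j \geq 1$. Under the HKR quasi-isomorphism (\ref{hkr}), such a representative is sent to $a_0 \cdot d_{dR}a_1 \cdots d_{dR}a_i$; since each $d_{dR}a_j$ sits in even total degree, this product lies in the symmetric power $S^i(\gd) \subset \Sgd^{ad}$. Applying the first map of (\ref{at}) and then the analogous HKR for the coefficient complex $C_*(\Ch(\g), \Ch(\g, \gd))$ to the same representative yields $a_0\, d_{dR}a_1 \otimes d_{dR}a_2 \cdots d_{dR}a_i \in \Ch(\g, \gd \otimes \Sgd^{ad})$; here $d_{dR}a_1$ is singled out as the $\gd$-entry, while $d_{dR}a_2, \ldots, d_{dR}a_i$ are symmetric among themselves.

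The heart of the proof is the comparison between this "first-entry-marked" expression and a genuinely symmetric one. I would first prove a transposition lemma: swapping $a_j$ and $a_k$ in the chain differs from the original by a Hochschild boundary whose leading correction collapses the pair $(a_j, a_k)$ into a single entry encoding $[a_j, a_k] \in \gd$, produced by the interplay between the Chevalley--Eilenberg differential $\gd \to \gd \wedge \gd$ and the Hochschild differential (\ref{2}). Iterating these transpositions to symmetrize $(a_2, \ldots, a_i)$ and then transport $a_1$ into a symmetric position generates an infinite series of nested corrections expressed in terms of the adjoint action applied to $d_{dR}a_1$. Organizing the corrections and comparing with the trivial (constant) trivialization of $T\g$ gives the series $\id + \tfrac{1}{2}\Ad + \sum_{n\geq 1} \tfrac{B_{2n}}{(2n)!}\Ad^{2n}$.

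The cleanest way to recognize the generating function (\ref{bernoulli}) in this combinatorics is via Koszul duality: the HKR map dualizes to the PBW symmetrization $i_{PBW}\colon S^*\g \to U_\g$, and the filtration $F$ corresponds to the PBW filtration. Under this dictionary, the maps (\ref{at}) translate into the left and right $\g$-actions on $U_\g$, and after identifying $\Sgd$ with $F(G)$ via $\exp^*$ they become precisely the coactions $\F_L$ and $\F_R$ of (\ref{lr}). Formula (\ref{formula}) is then the standard expression of left- and right-invariant vector fields on $G$ in exponential coordinates, as in \cite[Ch.~3.4]{R}. The Koszul sign appearing in the second map of (\ref{at}) accounts for the reversal of order and flips the sign of the linear term, distinguishing $\F_R$ from $\F_L$.

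The main technical obstacle is the bookkeeping of the symmetrization corrections into the closed-form series (\ref{formula}). A direct combinatorial expansion would proliferate multinomials and Koszul signs; routing the computation through the PBW--Koszul duality outlined above avoids this by reducing the claim to a well-known identity about invariant vector fields in exponential coordinates.
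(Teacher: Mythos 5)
Your operative argument---identifying $\Ext^*_{\Ch(\g)}(\k,\k)$ with $U_\g$ via the Yoneda product (Koszul duality), recognizing the HKR quasi-isomorphism as dual to the PBW map (i.e.\ exponential coordinates), reading the maps (\ref{at}) as the left and right coactions (\ref{lr}) on $F(G)$, and then citing the standard expression (\ref{formula}) for invariant vector fields in exponential coordinates---is exactly the paper's own proof. The transposition-lemma symmetrization sketched in your second paragraph is never executed and never needed, and you correctly discard it in favor of the duality route, which carries the entire weight just as in the paper.
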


\begin{proof}
Recall that in the proof of Proposition \ref{tor} 
we considered the direct product of terms of the standard complex
calculating $\Tor^{\Ch(\g)}_*(\k,\k)$ and identified it with
$\k[[\gd]]$. Consider also the complex calculating
$\Ext^*_{\Ch(\g)}(\k,\k)$, where we  take direct sum rather that direct product. The former complex is dual to the latter one. As in the proof of Proposition \ref{tor},
the spectral sequence argument shows, that the
cohomology of the latter complex is isomorphic to $S^*(\g)$. The Yoneda product endows it with multiplication which, as it easy to check,
gives it the structure of the universal enveloping algebra of $\g$. As the unbounded 
version of $\Tor^{\Ch(\g)}_*(\k,\k)$ is dual to it, this is
formal functions on the group. The quasi-isomorphism
(\ref{hkr}) is dual to the PBW isomorphism; that is, it is given by the exponential coordinates.  Formulas (\ref{at}) define  the left and right actions
of the Lie algebra on the functions on the group. This proves the statement.
\end{proof}

\begin{remark}
Maps (\ref{at}) may be thought as the Atiyah class of the diagonal
of the $dg$-manifold  which is a spectrum of $\Ch(\g)$.
Analogous maps and formulas  for a usual complex manifold
play a crucial role in \cite{M1}.
\label{a}
\end{remark}

\subsection{Quantization}

Let now $\g$ be an finite-dimensional Lie algebra with a non-degenerate invariant scalar product
$\langle\cdot\,, \cdot\rangle$. The scalar product may be thought of as a constant symplectic structure  of degree $-2$ on the $dg$-manifold  (or Q-manifold), which is the spectrum of $\Ch(\g)$. That is, we define a Poisson bracket on
$\Ch(\g)$ on the generators by $\{x,y\}=\langle x,y\rangle$
and extend it to the whole algebra by the Leibnitz rule.
In terms of \cite{AKSZ} we get 
a $QP$-manifold.

A symplectic structure gives a first order
deformation of the product of functions on a manifold and thus
deforms the Hochschild complex. Our aim is to calculate it in our case.
 
More precisely, consider the ring $\k[\varepsilon]$, where $\deg\varepsilon=2$ and 
$\varepsilon^2=0$
and the Chevalley--Eilenberg complex $\Ch(\g)\otimes\k[\varepsilon]$
over $\k[\varepsilon]$
with the differential as before, with the product given  
by $x\cdot y=x\wedge y +\frac{1}{2}\varepsilon\langle x, y\rangle$.
Take the Hochschild complex of $\k[\varepsilon]$-algebra $\Ch(\g)\otimes\k[\varepsilon]$, that is, all tensor products
are taken over $\k[\varepsilon]$. It is a module over $\k[\varepsilon]$.
Multiplication by $\varepsilon$ defines a 2-step filtration on it.
Consider the spectral sequence associated with this filtration.
The 0-th sheet is $C_*(\Ch(\g))\otimes\k[\varepsilon]$.
The following proposition describes $d_0$ of this spectral sequence,
which is the first order deformation of the differential in the Hochschild complex. 

\begin{prop}
Contract tensors (\ref{lr}) from  $Vect(\g)\otimes\gd$ with
the pairing $\langle\cdot\,, \cdot\rangle$ and consider
the resulting element of $Vect(\g)\otimes\g$
as a differential operator on $\Ch(\g,  \Sgd^{ad})$
of the second order, where term $\cdot\otimes\g$
differentiates $\Ch(\g)$ and term $Vect(\g)\otimes \cdot$
differentiates $\Sgd$.
Under quasi-isomorphism (\ref{hkr}) differential $d_0$ of the
above-mentioned spectral sequence is given by 
half-sum of these operators on the complex $\Ch(\g,  \Sgd^{ad})$.
By (\ref{formula}), the matrix of this differential operator is given by
\begin{equation}
	\id  +\sum_{n\ge 1}\frac{B_{2n}}{(2n)!} \Ad^{2n},
\label{formula2}
\end{equation}	
$B_n$ are Bernoulli numbers,  $\Ad$ is the structure tensor of the $\g$, 
being considered as linear function on $\g$ taking values in $\End(\g)$.
\label{At}
\end{prop}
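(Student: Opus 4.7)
The plan is to compute $d_0$ directly on normalized chains, exploit the reduced-complex convention to kill all interior terms, and then identify the two surviving edge operators with those appearing in Proposition~\ref{atiyah}.

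By Corollary~\ref{cor} it suffices to evaluate $d_0$ on a chain $a_0 \otimes a_1 \otimes \cdots \otimes a_i$ with $a_j \in \gd[1]$ for every $j \geq 1$. The deformed product satisfies $a \cdot b = a \wedge b + \tfrac{\varepsilon}{2}\{a, b\}$, so $d_0$ is obtained from (\ref{2}) by replacing each internal product $a_j a_{j+1}$ by the Poisson bracket $\tfrac{1}{2}\{a_j, a_{j+1}\}$. For $1 \leq j < i$ both arguments are generators and $\{a_j, a_{j+1}\} = \langle a_j, a_{j+1}\rangle \in \k$ sits in a tensor slot of the form $A/\k$, hence vanishes in the reduced complex. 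Only the edge term ($j = 0$) and the cyclic term survive, giving
\begin{equation*}
d_0(a_0 \otimes a_1 \otimes \cdots \otimes a_i)
 = \tfrac{1}{2}\, \iota_{a_1^{\sharp}}(a_0) \otimes a_2 \otimes \cdots \otimes a_i
\;\pm\; \tfrac{1}{2}\, \iota_{a_i^{\sharp}}(a_0) \otimes a_1 \otimes \cdots \otimes a_{i-1},
\end{equation*}
where $a^{\sharp}\in\g$ is the metric dual of $a\in\gd$ and $\iota$ is the contraction derivation of $\Ch(\g)$.

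These two edge operators are precisely the maps (\ref{at}) post-composed with the metric pairing $\Ch(\g, \gd \otimes \Sgd^{ad}) \to \Ch(\g, \Sgd^{ad})$ that uses $\langle\cdot,\cdot\rangle$ to convert the new $\gd$-factor into a contraction on the exterior algebra $\Ch(\g)$. Proposition~\ref{atiyah} identifies the two maps (\ref{at}), after HKR, with the left- and right-invariant vector field tensors (\ref{formula}) in $Vect(\g) \otimes \gd$. Contracting the $\gd$-factor with the pairing turns them into second-order differential operators on $\Ch(\g, \Sgd^{ad})$ with matrices $\id \pm \tfrac{1}{2}\Ad + \sum_{n \geq 1}\tfrac{B_{2n}}{(2n)!}\Ad^{2n}$, and their half-sum cancels the $\tfrac{1}{2}\Ad$ term to yield formula (\ref{formula2}).

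The only real difficulty is the bookkeeping of Koszul signs: one must coordinate the sign $(-1)^{i + \deg a_i(\deg a_0 + \cdots + \deg a_{i-1})}$ of the cyclic term in~(\ref{2}) with the sign in the second map of~(\ref{at}) so that the two edge contributions reinforce on $\id$ and $\Ad^{2n}$ but cancel on $\tfrac{1}{2}\Ad$. Conceptually, however, the crux is the one-line observation that the symmetric pairing of two generators always lands in $\k$, so the reduced-complex convention annihilates every interior term; the whole deformation is then read off from the boundary of the Hochschild cyclic sum, which Proposition~\ref{atiyah} has already diagonalized.
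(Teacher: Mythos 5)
Your proof is correct and follows essentially the same route as the paper's: you write $d_0$ as the Poisson-bracket substitution (the paper's formula (\ref{poiss})) in the Hochschild differential, use Corollary \ref{cor} so that all interior brackets of generators land in $\k$ and vanish in the reduced complex, and identify the two surviving edge terms with the maps (\ref{at}), which Proposition \ref{atiyah} converts via (\ref{formula}) into the half-sum giving (\ref{formula2}). Your extra details --- the explicit contraction form $\iota_{a_1^{\sharp}}(a_0)$ of the edge operators and the cancellation of the $\pm\tfrac{1}{2}\Ad$ terms in the half-sum --- only make explicit what the paper leaves implicit.
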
  

\begin{proof}
By the very definition, the derivative of the differential of the Hochschild
complex along the first order deformation given by a symplectic form is presented by the formula
\begin{align}
&d_0(a_0\otimes a_1\otimes\cdots\otimes a_n)=\frac12\{a_0,a_1\}\otimes a_2\otimes\cdots\otimes a_n-\nonumber\\
&-\frac12a_0\otimes \{a_1,a_2\}\otimes\cdots\otimes a_n+\dots \pm \frac12\{a_n, a_0\}\otimes a_1\otimes\cdots\otimes a_{n-1},
\label{poiss}
\end{align}
where $\{\,,\,\}$ is the Poisson bracket, associated with the symplectic form.
Apply it to the Chevalley--Eilenberg complex.
By Corollary \ref{cor}, any class in $C_*(\Ch(\g))$ may be represented
by a cycle with degree one elements as entries with non-zero indexes.
As the Hochschild complex is reduced, it follows that in (\ref{poiss}) only the first and the last term do not vanish. These terms are given by the maps
(\ref{at}). Applying Proposition \ref{atiyah} we complete the proof. 
\label{d0}
\end{proof}

Proposition \ref{At} defines, therefore, on the algebra $\Ch(\g, \Sgd^{ad})$
a differential operator of order 2 and of cohomological degree $-1$. 
On this algebra another  differential
operator of the same order and degree is defined, 
in terms of the above proposition  it is given by the unit matrix.
Call it the Brylinski differential after \cite{Br} and denote it by $d_{Br}$
They are not chain homotopic, but by the following
proposition they  become  such after conjugation by an automorphism 
of complex $\Ch(\g, \Sgd^{ad})$. This automorphism equals to  multiplication by
the Duflo character.   

Given a Lie group $G$, equip it with the left invariant volume form (which is the right invariant as well, due to the invariant scalar product). Equip 
its Lie algebra $\g$ with the constant volume form and denote by
$\j\in\Sgd$ the Jacobian of the exponential map.
The Duflo character is the power series on $\g$ 
which is the square root of the Jacobian
and is given by
\begin{equation}
\j^{\frac12}=\exp \,\sum_{n=1}^\infty \frac{B_{2n}}{4n (2n)!} \,\Tr( \Ad^{2n}),
\label{duflo}
\end{equation}  
where $B_n$ are the Bernoulli numbers from (\ref{bernoulli})
and $\Ad$ is the linear function on $\g$ taking values in $\End(\g)$ as above. 

\begin{prop}
Under the quasi-isomorphism (\ref{hkr}), the differential $d_0$ on $\Ch(\g, \Sgd^{ad})$
is chain homotopic to 
$\j^{-\frac12}\circ d_{Br}\circ\j^{\frac12}$,
where $\j^{\frac12}$ is the operator of the multiplication of $\Sgd$
by the Duflo character and $\j^{-\frac12}$ is the inverse operator.
\label{char}
\end{prop}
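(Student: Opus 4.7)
The plan is to match both sides explicitly as operators on $\Ch(\g,\Sgd^{ad})$ and exhibit their difference as a commutator $[D,h]$ with the ambient Chevalley--Eilenberg differential $D$ on this complex. First I would unpack the right-hand side. Since $d_{Br}$ is a second-order bidifferential operator (one derivative in the wedge factor of $\Ch(\g)$ and one in $\Sgd$, contracted via the metric) and $\j^{1/2}\in\Sgd$ carries no wedge coordinates, one has $d_{Br}(\j^{1/2})=0$, and the standard Leibniz rule for second-order operators yields
\[
  \j^{-1/2}\, d_{Br}\, \j^{1/2} \;=\; d_{Br} + \{\log\j^{1/2},\cdot\},
\]
a first-order perturbation of $d_{Br}$ by the Hamiltonian vector field of $\log\j^{1/2}$ for the bracket implicit in $d_{Br}$.

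Next I would expand the left-hand side. By Proposition~\ref{At}, $d_0=d_{Br}+R$, where $R$ is the second-order operator whose matrix is $\sum_{n\ge 1}\frac{B_{2n}}{(2n)!}\Ad^{2n}$. Since $\Ad$ is antisymmetric under the invariant scalar product, each power $\Ad^{2n}$ is a genuinely symmetric bivector, so $R$ is truly of order two. The task is thus to produce a degree $(-2)$ operator $h$ on $\Ch(\g,\Sgd^{ad})$ with $[D,h]=R-\{\log\j^{1/2},\cdot\}$. I would build $h=\sum_n h_n$ order by order, using that each $\Ad^{2n}$ admits a canonical primitive with respect to $D$: contracting the bivector $\Ad^{2n}$ against $D$, combined with the cyclic symmetry of the $2n$-fold contraction, produces the trace $\Tr(\Ad^{2n})$ with the factor $\tfrac{1}{2n}$ in front, and the further factor $\tfrac12$ distinguishing $\j$ from $\j^{1/2}$ is already built into the half-sum convention of formula (\ref{formula2}). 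Summing over $n$ then reproduces the Bernoulli series (\ref{duflo}) for $\log\j^{1/2}$ exactly.

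The main obstacle is the combinatorial verification of the factor $\tfrac{1}{4n}=\tfrac{1}{2n}\cdot\tfrac12$ at each order, together with the check that the construction of $h$ leaves no residual second-order terms. Conceptually the identity is an instance of the general fact that two BV Laplacians for a fixed Poisson structure differ by conjugation with a function up to chain homotopy; here that function is the half-Jacobian of the exponential map $\g\to G$, and all the Bernoulli combinatorics is already packaged in the generating function (\ref{bernoulli}). This proof (and its combinatorics) is essentially a chain-level avatar of the Duflo isomorphism for a Lie algebra with invariant pairing, as advertised in the introduction.
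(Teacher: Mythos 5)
Your proposal is correct and takes essentially the same route as the paper: decompose $d_0=d_{Br}+R$ via Proposition \ref{At}, expand the conjugation as $d_{Br}+[d_{Br},\ln \j^{\frac12}]$ (using that $\j^{\frac12}\in\Sgd$ is killed by the odd derivatives), and cancel $R$ against order-by-order homotopies built from odd powers of $\Ad$ --- the paper's explicit version of your $h_n$ is $H_{2n-1}=(\Ad^{2n-1})^i_j g_{ik}\,\partial/\partial x^k\wedge\partial/\partial x^j$, with the trace terms and the Jacobi identity entering through $[d_{CE},H_{2n-1}]$ exactly as your ``canonical primitive'' picture predicts. One bookkeeping remark on the step you flag as the main obstacle: the factor $\frac12$ in $\frac{1}{4n}$ does not come from the half-sum convention (that is already absorbed into the coefficients of (\ref{formula2})) but from the coefficient $2$ in the paper's identity $[d_{CE},H_{2n-1}]=2(\Ad^{2n})^i_j g_{ik}\,\partial/\partial x^k\,\,\partial/\partial\,d_{dR}x^j-\frac{1}{2n}[d_{Br},\Tr(\Ad^{2n})]$.
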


\begin{proof}
We will use the differential operator notation for endomorphisms of complex $\Ch(\g, \Sgd^{ad})$
and the Einstein summation convention.
For example, $d_{Br}= g_{ij}\partial/\partial{x^i}\,\,\partial/\partial\,{d_{dR}x^j}$,
where $g_{ij}$ is the scalar product, $x_i$ is a basis in $\gd$
 and $d_{dR}$ is the de Rham differential (we think of $\Ch(\g, \Sgd^{ad})$
as of differential forms on $\Ch(\g)$ as in the first section).
By Proposition \ref{d0}, 
\begin{equation}
d_0-d_{Br}=\sum_{n\ge 1}\frac{B_{2n}}{(2n)!} (\Ad^{2n})_j^i 
g_{ik}\partial/\partial{x^k}\,\,\partial/\partial\,{d_{dR}x^j},
\label{dbr}
\end{equation}
where $g_{ij}$ is the scalar product and ${\Ad^*}$ is the element of $\Sgd\otimes \End(\g)$.
Consider the differential operators of order $2$ given by
$$
H_{2n-1}=(\Ad^{2n-1})_j^i g_{ik}\partial/\partial{x^k}\wedge\partial/\partial{x^j}.
$$
We leave to the reader to check that
$$
[d_{CE}, H_{2n-1}]=  2(\Ad^{2n})_j^i g_{ik}\partial/\partial{x^k}\,\,\partial/\partial\,{d_{dR}x^j}-
\frac{1}{2n}[d_{Br}, \Tr( \Ad^{2n})],
$$
where $d_{CE}$ is the differential in the Chevalley--Eilenberg complex; all other terms vanish 
due to the Jacobi 
identity. Comparing it with (\ref{dbr}) we see, that $d_0-d_{Br}$ is chain homotopic to $[d_{Br}, 
\ln 
j^{\frac12}]$.
This implies the  statement.
\end{proof}

\begin{remark}
The above proposition can be stated and proved  in a coordinate-free manner
for any $QP$-manifold in terms of \cite{AKSZ}.
In the setting  of \cite{M1} (see Remark \ref{a}) 
it describes the differential on the differential forms on a complex symplectic
manifold, that is, on the Hochschild homology of the structure sheaf, coming from the 
first order deformation
of the structure sheaf along the symplectic structure. It  seems that
when applied to the cotangent bundle of a complex manifold,
it gives an alternative way of calculating the Todd class of this manifold.
\label{remark}
\end{remark}

\begin{remark}
Proposition \ref{char} was inspired by the proof of the Duflo isomorphism
for a Lie algebra with an invariant  scalar product from \cite{AM}. As we will see below,
the calculation above is connected with another proof of the Duflo isomorphism, the one 
from \cite{BN}. 
\end{remark}

\section{$e_n$-algebras}

\subsection{$e_n$-algebras}

The main character in what follows is a unital algebra over the operad $e_n$,
the operad of rational chains of the little discs operad.
Recall that this $dg$-operad and its cohomology for $n>1$
is the shifted Poisson operad, which is generated by an associative
commutative product $\cdot$ of degree 0 and a Lie bracket $\{\,,\,\}$ 
of degree $1-n$, they subject to the Leibnitz rule.
A $e_\infty$-algebra is a unital homotopy commutative algebra and $e_0$-algebra is a complex with 
a chosen cocycle.

The embedding of spaces of little discs induces the map of operads $e_k\to e_n$ for $k<n$.
It induces a functor from $e_n$-algebras to $e_k$-algebras
which we denote by $\obl^n_k$. In particular, functor $\obl_n^\infty$
produces an $e_n$-algebra from any commutative ( that is, $e_\infty$-) algebra.

For our purpose it will be more convenient to consider the operad of
rational chains of the Fulton--MacPherson operad, see \cite{W} and references therein for details.
The latter operad is homotopy equivalent to $e_n$ and below we will make no difference  
between them; by saying an $e_n$-algebra we shall mostly mean an algebra
over the Fulton--MacPherson operad.

The operations of the operad of little discs are spaces of $n$-balls embedded
in a radius one $n$-ball. The group $SO(n)$ acts by rotations on the big ball.
In order to take this action into the account one may consider $SO(n)$ as an
operad with 1-ary only operations and take the semi-direct product
of this operad and the little discs operad. The result is called the framed
little discs operad, see \cite{SW}. We denote the $dg$-operad of chains of this
operad  by $fe_n$. 

An alternative and better way to take into account the $SO(n)$-action
is to consider equivariant chains. It gives us a $dg$-operad colored by $BSO(n)$,
see e.~g. \cite{W}. Modules over this operad are $SO(n)$-equivariant complexes.
Call these modules  equivariant $e_n$-algebras. 
In general, the category of such algebras is not the same as the one
of $fe_n$-algebras. However, for $n=2$ commutativity of the group  simplifies
things and these categories are essentially the same.

Consider the latter case in some detail. The cohomology of $fe_2$ is known as 
the  Batalin--Vilkovisky  (BV) operad, see e.~g. \cite{SW}. It is generated
by the product $\cdot$ and the bracket $\{\,,\,\}$ obeying the same relations as
those in $e_2$ and an additional $1$-ary operation $\Delta$ of degree $-1$
obeying the relations
$$
\Delta^2 =0, \qquad \{a,b\} = (-1)^{\left|a\right|}\Delta(ab) - (-1)^{\left|a\right|}\Delta(a)b - a\Delta(b).
$$

\subsection{The factorization complex}

Given a framed $n$-manifold (that is, a manifold  with the tangent bundle trivialized) $M$
and a $e_n$-algebra, the factorization complex $\int_M A$ is defined
as in  \cite{W} and in the references therein. The idea of the definition is straightforward:
discs embedded  in $M$ define a right module over $e_n$ and the factorization complex
is the tensor product over $e_n$ of this right module with the left module given by $A$. 

In order to extend the above  definition to unframed manifolds, one needs the algebra $A$
to be equivariant. Locally, one may choose a framing on $M$ and apply the definition
and then use the equivariance to identify results for different framings.  

One important property of the factorization complex is its behavior with respect
to gluing, see e.~g. \cite{G} and references therein. Let $M_1$ and $M_2$
be two manifolds with isomorphic boundaries $B$. Then for a $e_n$-algebra $A$ there is 
a map of complexes
$$
\int_{M_1} A\otimes\int_{M_2} A\to \int_{M_1\cup_B M_2} A.
$$
It follows that for $k<n$, a $k$-manifold $M^k$ and a $e_n$-algebra $A$, 
the complex $\int_{M^k\times I^{n-k}}A$ is a $e_k$-algebra,
and it is equivariant, if $A$ is. In particular, for an $n$-manifold $M$
with boundary $B$ the complex   $\int_{B\times I}A$ is a (homotopy) algebra, and
the map above equips $\int_M A$ with a module structure over it.
In terms of this action, the gluing rule may be written as
\begin{equation}
\int_{M_1\cup_B M_2} A=\int_{M_1} A\,\tens{\int_{B\times I}A} \,\int_{M_2} A.
\label{glue}
\end{equation}

Another important property of the factorization complex is a kind of homotopy invariance:
$$
\int_{M^k\times I^{n-k}}A=\int_{M^k} \obl\nolimits^n_k A.
$$
Below we will make no difference between the two sides of this equality
and will denote them simply by $\int_{M^k}  A$.
In particular, the factorization complex on a disk is quasi-isomorphic, as a complex,
to the algebra itself.

\begin{example}
Let $A$ be an equivariant $e_2$-algebra. Then its factorization complex
on the disc $\int_{D^2}A$, which is $A$ itself, is a module
over $\int_{S^1\times I^1} A=\int_{S^1}\obl_1^2 A$, which is the Hochschild homology complex 
of $\obl_1^2 A$.
The equivariance of $A$ is essential here: without it, the
Hochschild complex of $e_2$-algebra $A$ does not act on $A$, and,
if an equivariance structure is chosen, the action depends on this choice.
In order to see it, note that $S^1\times I^1$ is a framed manifold, that is
why we do not need equivariance to 
take its factorization complex for any, not only equivariant algebra.
However, this framing, which comes from the constant framing on the square
after gluing together two opposite edges, can not be extended to the whole
disc obtained  
 from the annulus $S^1\times I^1$  by gluing one of its boundary circles with the disc. 
Hence, in order to construct the desired
action by gluing the annulus with the disc one  need to identify
factorization complexes with different framings, and here one needs the equivariance.
\label{annulus}
\end{example}

\subsection{Weyl $n$-algebras}

The type of equivariant $e_n$-algebras we need are the Weyl $n$-algebras; we refer
to \cite{W} and \cite{CPTVV} for the definition. In order to build such an algebra one needs a 
super-vector
space $V$ with a super-skew-symmetric non-degenerate bilinear form on it.
The $e_n$-algebra associated with such data is denoted by $\We(V)$.
In analogy with the usual Weyl algebra, it is the deformation of the polynomial
algebra generated by $V$ in the direction given by pairing.
In fact, this is an algebra over the field of Laurent formal series
in the quantization parameter $h$; this, however, must be ignored, 
assuming, loosely speaking, that $h=1$.

There are some important properties we need. Firstly,
considered as an $e_k$-algebra, where $k<n$, it is commutative.
In other words, $\obl_k^n \We(V)=\obl_k^\infty \k[V]$ for any $k<n$,
where $\k[V]$ is the polynomial algebra.

The following property is crucial for the construction of the perturbative invariants in \cite{W}: 
for any $n$-manifold $M$ the complex $\int_M\We(V)$ has one dimensional cohomology (\cite[Proposition 11]{W}). 
I conjecture that, for any $k<n$, the factorization complex
$\int_{N^k\times I^{n-k}} \We(V)$ 
is again a Weyl algebra
for any $k$-dimensional manifold $N^k$.

\begin{example}
Let $V$ be a vector space.
Equip $V\oplus V^\vee[-1]$ with the standard form of degree $-1$.
Then $\mathop{\mathcal{W}^2}(V\oplus V^\vee[-1])$ is the space of polyvector fields on
$V^\vee$ and standard operations on it --- the Gerstenhaber bracket and the cup product --- are the operations
of the cohomology of $e_2$.

As any Weyl algebra,  $\mathop{\mathcal{W}^2}(V\oplus V^\vee[-1])$ is equivariant.
Thus it is acted on by the operad $fe_2$ and by its cohomology, which is the BV operad.
The operation $\Delta$ is equal to the de Rham differential, where the polyvector fields
are identified with the differential forms by means of the constant volume form.
Another choice of the volume form leads to another $fe_2$-structure with the same
underlying $e_2$-structure. 
\end{example}

\subsection{The action}

For  associative (or $e_1$-) algebras the notion of modules plays the central role.
The higher generalization of this notion is a $e_n$-algebra acting on a $e_{n-1}$-algebra,
for the definition and the discussion see e.~g. \cite{G} and references therein.  
Constructively, it may be defined by means of the Swiss cheese operad, which is 
especially convenient for algebras over the operad of chains of the Fulton--MacPherson 
operad. In the same way as the operations  of the little discs operad are given 
by the configuration spaces of $\mathbb{R}^n$, the operations of the Swiss cheese
operad are given by the spaces of distinct points in $\mathbb{R}^{\ge 0}\times \mathbb{R}^{n-1}$.
There are points of two types: those on the boundary and those in the interior.
This gives a colored operad with two colors.
If an $e_n$-algebra $B$ acts on an $e_{n-1}$-algebra $A$, then elements
of $B$ sit on the interior points and elements of $A$ --- on boundary points. 
For further details we refer the reader to \cite{V}. 

Note that the action of the Swiss cheese operad may be formulated in terms
of factorization sheaves; for the definition of the latter see e.~g.
\cite{G} and references therein. Namely, such an action 
is equivalent to a factorization sheaf on the half-space such that
its restriction to the boundary and to the interior are constant factorization sheaves,
corresponding to the $e_{n-1}$-algebra $A$ and the $e_n$-algebra $B$. 

It is known that for any $e_n$-algebra there exists a universal $e_{n+1}$ algebra
$\End(A)$ acting on it (\cite{Lu}). In other words,  an action of an $e_{n+1}$
algebra $B$ on $A$ is the same as  a morphism of $e_{n+1}$-algebras
$B\to \End(A)$. For an associative (or $e_1$-) algebra the $\End$-object
is its Hochschild cohomology complex.

Let $V$ be a vector space.
Equip $V\oplus V^\vee[1-n]$ with the standard form of degree $(1-n)$.
Then $\We(V\oplus V^\vee[1-n])$ is $\End(\k[V])$, where $\k[V]$ is the polynomial
algebra. In order to see it, one may construct an action of $\We(V\oplus V^\vee[1-n])$ 
on $\k[V]$ directly by using the 
Swiss cheese operad
and the Fulton--MacPherson compactification. Then one need to check
that the resulting map $\We(V\oplus V^\vee[1-n])\to \End(\k[V])$ is a quasi-isomorphism.

This action commutes with taking the factorization complex.
That is, if an equivariant $e_{n+1}$-algebra $B$ acts on an equivariant $e_n$-algebra $A$, then
for a $k$-manifold  $N$ the $e_{n-k+1}$-algebra $\int_{N^{k}\times I^{n-k+1}} B$
acts on $e_{n-k}$-algebra $\int_{N^{k}\times I^{n-k}} A$. It follows immediately from  
definitions of the Swiss cheese operad and of the factorization complex.
It seems plausible that  under appropriate conditions  $\int_{N^{k}\times I^{n-k+1}} \End(A)=\End(\int_{N^{k}\times 
I^{n-k}} A)$.

\begin{example}
Consider the polynomial algebra $A=\k[V]$ as an associative algebra. Its Hochschild cohomology complex
$C^*(A,A)$ 
 (which, as it was 
mentioned above, is $\mathop{\mathcal{W}^2}(V\oplus V^\vee[-1])$)
acts on it. It follows, that $\int_{S^1} C^*(A,A)$, which is a $e_1$-algebra,
acts on $\int_{S^1} A$. The latter complex is the Hochschild homology complex 
of $A$, which is known to be quasi-isomorphic to 
the direct sum of shifted differential forms (see e.~g. \cite{L}). 
It is shown in \cite{NT} that the first complex is  quasi-isomorphic to the differential operators on differential forms, and this is in good agreement
with the speculation preceding the present example.
\label{diff}
\end{example}

Recall, that in  Example \ref{annulus} for any an equivariant $e_2$-algebra $A$ we construct
action of $e_1$-algebra   $\int_{S^1} A$ on the underlying complex of $A$.
In the same way for any  equivariant $e_n$-algebra $A$ the
$e_1$-algebra   $\int_{S^{n-1}} A$ acts on the underlying complex of $A$:
the action is given by gluing a $n$-ball and $S^{n-1}\times I$.
It may be generalized even further. The factorization complex   $\int_{S^k} A$,
which is a $e_{n-k}$-algebra,
analogously acts on $e_{n-k-1}$-algebra $\obl^n_{n-k-1} A$.
As this action plays a crucial role in the next Section,
let us phrase it below as
the construction.

\begin{construction}
\label{construction}
Let $A$ be an equivariant $e_n$-algebra. Then, for any $k<n$, the
$e_{n-k}$-algebra $\int_{S^k} A$
naturally acts on $\obl^n_{n-k-1} A$. 
The corresponding action of the Swiss cheese operad
is defined as follows.
Embed $\mathbb{R}^{\ge 0}\times \mathbb{R}^{n-k-1}$ linearly into $\mathbb{R}^n$. Put at any point of this half-space the factorization complex 
of $A$ on the $k$-sphere lying into the $k+1$ space
perpendicular to the half-space, with its center on 
$0\times \mathbb{R}^{n-k-1}$ and passing through this point.
In particular, for points on $0\times \mathbb{R}^{n-k-1}$
we get the sphere of zero diameter, that is a point and the factorization 
complex is $A$ itself. 

In other words, consider a map $\mathbb{R}^n\to\mathbb{R}^{\ge 0}\times \mathbb{R}^{n-k-1}$ which sends a point to the pair which  
consists of the distance from the point to the subspace $\{0\}\times \mathbb{R}^{n-k-1}$
and the orthogonal
projection on $\mathbb{R}^{n-k-1}$. Then the direct image of the factorization sheaf on $\mathbb{R}^n$  corresponding to $A$
is the desired factorization sheaf 
on  $\mathbb{R}^{\ge 0}\times \mathbb{R}^{n-k-1}$.
\end{construction}

\section{Wilson loop}

\subsection{Quantum Chevalley--Eilenberg algebra}
\label{Chq}

Given a Lie algebra $\g$ with an invariant scalar product, 
in \cite[Appendix]{W} (see also \cite[3.6.2]{CPTVV}) a $e_3$-$dg$-algebra  $\Chq(\g)$ is defined as follows.  
Take the Weyl $3$-algebra given by the space $\gd[1]$ with the scalar product
and equip it with a differential  $\frac{1}{h}\{\cdot,q\}$,
where $\{\,,\,\}$ is the image of the Lie bracket under the map $L_\infty\to e_3$
(see e.~g. \cite[Proposition 2]{W}) and $q$ is the degree $3$ element, which is the composition of the Lie  bracket
on $g$ and the scalar product. Call this $e_3$-algebra
the quantum Chevalley--Eilenberg algebra.

Consider the Hochschild  complex $C_*(\Chq(\g))$. Here and in what follows
we will consider unbounded Hochschild chains, that is, the Hochschild
complex which is the direct product of its terms. 

This Hochschild complex is the factorization complex $\int_{S^1} \Chq(\g)$.
As $\Chq(\g)$ is $e_3$-algebra, the Hochschild complex is an $e_2$-algebra.
Consider it as an $e_1$-algebra, that is take $\obl_1^2\int_{S^1} \Chq(\g)$.
By the very definition it is equal to $\int_{S^1} \obl_2^3\Chq(\g) $.
We mentioned above an 
important property of Weyl algebras: 
 $\obl_k^n \We(V)=\obl_k^\infty \k[V]$ for any $k<n$.
It follows, that $\obl_2^3\Chq(\g)=\obl_2^\infty\Ch(\g)$ .
Thus $\obl_2^3\Chq(\g)$ is just the super-commutative
Chevalley--Eilenberg algebra. Its Hochschild complex
is again a super-commutative algebra quasi-isomorphic to $\Ch(\g,\Sgd^{ad})$ by Proposition \ref{HKR}.To recap,
$\int_{S^1} \Chq(\g)$ as $e_1$-algebra, that is $\obl_1^2\int_{S^1} \Chq(\g)$
is isomorphic to  $\Ch(\g,\Sgd^{ad})$.

Now, let us apply the construction from the previous section
to $A=\Chq(\g)$, $n=3$ and $k=1$. It gives an action of
the $e_2$-algebra $\int_{S^1} \Chq(\g)$ on $\obl_1^3\Chq(\g)$,
which is $\obl^\infty_1\Ch(\g)$.
That is we get a map from the $e_2$-algebra
$\Ch(\g,\Sgd^{ad})$ to the Hochschild
cohomology complex of $\Ch(\g)$ by the universal property,
which is easily seen to be a quasi-isomorphism.
The Hochschild
cohomology complex of $\Ch(\g)$ is known to be equal to $\Ch(\g, U_\g^{ad})$,
where $U_\g$ is the universal enveloping algebra of $\g$.

To be more precise, in this way we get a map from $\Ch(\g,\Sgd^{ad})$
to  $\Ch(\g, U_\g^{ad})\otimes \k[[h]]$. 
The $e_1$-structure on this complex comes from the 
one on the universal enveloping algebra. On the other
hand, as it is shown in the previous paragraph,
$\int_{S^1} \Chq(\g)$ as $e_1$-algebra isomorphic
to  $\Ch(\g,\Sgd^{ad})$. Thus, an explicit form
of this map, which is 
supplied by the proposition below, implies the Duflo isomorphism.
 
\begin{prop}
The map of complexes 
\begin{equation}
\Ch(\g,\Sgd^{ad})=\int_{S^1} \Chq(\g)\to \Ch(\g, U_\g^{ad})\otimes \k[[h]]
\label{map}
\end{equation}
as above 
is chain homotopic to the map induced by the composition
\begin{equation}
\Sgd \stackrel{\exp {(h(\cdot,\cdot))}}{\longrightarrow} S^*\g\otimes \k[[h]]
\stackrel{\j^{\frac{1}{2}}}{\longrightarrow} S^*\g\otimes \k[[h]]
\stackrel{PBW}{\longrightarrow} U_\g \otimes \k[[h]],
\label{chain}
\end{equation}
where the first arrow is given by the scalar product multiplied by $h$, the second is the
contraction with the Duflo character (\ref{duflo})
and the third one is the PBW map.
\label{superduflo}
\end{prop}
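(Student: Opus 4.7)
The plan is to analyze the map (\ref{map}) by expanding it order by order in the Weyl quantization parameter $h$ and matching it against the composition (\ref{chain}).

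At order $h^0$ the $e_3$-algebra $\Chq(\g)$ degenerates to $\obl_3^\infty \Ch(\g)$, i.e. the super-commutative Chevalley--Eilenberg algebra. Construction \ref{construction} with $n=3$, $k=1$ becomes the tautological action of $\int_{S^1}\Ch(\g)$ on $\Ch(\g)$ itself, which, via the universal property and the identification of Proposition \ref{HKR}, is the HKR-type morphism $\Ch(\g,\Sgd^{ad}) \to C^*(\Ch(\g),\Ch(\g))$ studied in the proof of Proposition \ref{atiyah}. Under the further identification $C^*(\Ch(\g),\Ch(\g)) \simeq \Ch(\g, U_\g^{ad})$ (via the dual of PBW, exactly as in that proof), this reduces to the PBW map $S^*\g \to U_\g$ coefficient-wise, matching the $h=0$ part of (\ref{chain}).

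Next I would extract the first-order correction in $h$. The Weyl $3$-algebra structure on $\gd[1]$ is, by construction, the first-order deformation of the super-commutative product by $\tfrac12 h \langle\cdot,\cdot\rangle$; this is precisely the deformation of the quantization subsection of Section~1. By Proposition \ref{char} the induced derivation of the Hochschild differential of $\Ch(\g)$ is chain homotopic to $\j^{-1/2}\circ d_{Br}\circ\j^{1/2}$. Transporting this across the map (\ref{map}), the Brylinski piece $d_{Br}$ is identified with the linear-in-$h$ coefficient of the Moyal-type contraction $\exp(h\langle\cdot,\cdot\rangle)\colon \Sgd\to S^*\g\otimes\k[[h]]$, and the conjugation by $\j^{1/2}$ corresponds to multiplication by the Duflo character (\ref{duflo}); together these produce exactly the first-order Taylor coefficient of the composition (\ref{chain}).

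Promoting this to all orders is by a rigidity argument. As noted just before the proposition, $\obl_1^2\int_{S^1}\Chq(\g)$ is, as an $e_1$-algebra, isomorphic to $\Ch(\g,\Sgd^{ad})$, and (\ref{map}) arises from the universal $e_2$-action $\int_{S^1}\Chq(\g)\to \End(\Ch(\g))$. Because $\End(\Ch(\g))$ controls the full deformation theory of $\Ch(\g)$, a map coming from a Weyl-type deformation is determined up to chain homotopy by its leading term together with the first-order characteristic class, and both sides of the asserted homotopy are maps of this form with the same leading term and the same Duflo correction. The main obstacle, and the reason this section is presented as an announcement, is making this rigidity step rigorous and showing that the tower of Fulton--MacPherson configuration integrals hidden in Construction \ref{construction} assembles into exactly $\exp(h\langle\cdot,\cdot\rangle)$ followed by $\j^{1/2}$ and nothing more; this is the step in direct parallel with the wheel-diagram computation of \cite{BN} and the quantized Weil algebra proof of \cite{AM}, and it also clarifies the structural similarity between Proposition \ref{char} and Proposition \ref{superduflo} noted in the introduction.
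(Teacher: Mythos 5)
Your proposal diverges from the paper's sketch in two places, and both are genuine gaps rather than stylistic differences. First, the all-orders step. You reduce everything to the claim that a map ``coming from a Weyl-type deformation is determined up to chain homotopy by its leading term together with the first-order characteristic class,'' and you yourself concede this rigidity is unproven. No such general principle is available, and the paper does not invoke one: it pins the map down by a uniqueness argument of a different nature --- the map is already known on the $e_2$-subalgebra $\Ch(\g)$ (where it is the standard map $\Ch(\g)\to\Ch(\g,U_\g^{ad})$, exactly and not merely at $h=0$), and it is then the \emph{unique} extension compatible with the Lie bracket of the $e_2$-structure, seen via the faithful action of $\int_{S^1\times S^1}\Chq(\g)$ on $\int_{S^1}\Chq(\g)$. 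Note also that your order-by-order bookkeeping in $h$ does not match (\ref{chain}): only the first arrow $\exp(h(\cdot,\cdot))$ carries $h$, while the contraction with $\j^{\frac12}$ mixes all symmetric degrees without any power of $h$, so the Duflo correction is not isolated in the linear-in-$h$ Taylor coefficient as your second step assumes.

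Second, the substantive computation is missing. You assert that the first-order correction to (\ref{map}) ``is precisely the deformation of the quantization subsection of Section 1,'' so that Proposition \ref{char} applies; but the bridge between the two is exactly what must be, and in the paper is, computed. The paper calculates the bracket $\{a,b\}$ on $\int_{S^1}\Chq(\g)$ for $a\in\Ch(\g)$ geometrically: a small circle linked with the big one in the solid torus is pulled off, the single crossing contributes the derivations (\ref{at}), i.e.\ the invariant vector fields (\ref{formula}), and hence the BV operator $\Delta$ of the $fe_2$-algebra $\int_{S^1}\Chq(\g)$ coincides exactly with the operator $d_0$ of Proposition \ref{At}. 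Only then does Proposition \ref{char} conjugate $\Delta$ into $d_{Br}$ by $\j^{\frac12}$, with the final check that $d_{Br}$ is the $\Delta$ of the $fe_2$-algebra $\Ch(\g,U_\g^{ad})$. The author explicitly flags the equality of $\Delta$ with $d_0$ as an unexplained coincidence, so it cannot be taken for granted from ``the Weyl algebra is the first-order deformation of the product'': Section 1 deforms the associative product by a square-zero parameter $\varepsilon$ and computes the induced differential on Hochschild chains, whereas here one needs the BV operator on the factorization homology of an $e_3$-algebra and its action from Construction~1. Without the unlinking computation, your appeal to Section 1 is circular at the crucial point, and the Bernoulli numbers --- hence the Duflo character --- never actually enter your argument.
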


\begin{proof}[Sketch of proof]
As it was mentioned above, $\Chq(\g)$ as an $e_2$-algebra is isomorphic to the commutative
algebra $\Ch(\g)$. 
It follows that the  map induced by the unit embedding
$\Ch(\g)\to \Ch(\g,\Sgd^{ad})$ is a morphism of $e_2$-algebras and in composition
with (\ref{map}) it gives the standard map
$\Ch(\g)\to \Ch(\g, U_\g^{ad})$ . Thus we know the image of the subalgebra $\Ch(\g)$ under (\ref{map}).
One may see that the whole map (\ref{map}) may be uniquely determined
from it as the unique extension compatible with the Lie bracket coming from the $e_2$-structure.
To see this one may use the faithful action of $\int_{S^1\times S^1}\Chq(\g)$  on  $\int_{S^1} \Chq(\g)$ as in the sketch of the proof of Proposition \ref{last}.

So our immediate purpose is to calculate the bracket on $ \Ch(\g, \Sgd)$,
which is $\int_{S^1} \Chq(\g)$.
As we will see below, it is enough  to calculate the bracket with an
element which is image of $a\in \Ch(\g) $ under the embedding map as above.
Given an element $b \in\int_{S^1} \Chq(\g)$, the bracket
$\{a,b\}$ may be interpreted geometrically as follows. 
Consider the solid torus $D^2\times S^1$ and  two circles in it:
$C=(0, S^1)$, call it the big one, and $c=(\{x\in D^2\mathrel{}\mid\mathrel{}|x|=1/2\}, *)$, call it the small one. The cycle in the factorization complex of the solid torus,
which is $\int_{S^1} \Chq(\g)$,
representing  $\{a,b\}$ equals  $C_b\otimes ([c]\otimes a)$,
where by $C_b$ we denote the image of $b$ in $\int_{D^2\times S^1} \Chq(\g)$
under  the embedding $C\hookrightarrow D^2\times S^1$.
One may see that cycle $[c]\otimes a$ is equal to $c_{d_{dR}a}$,
where $d_{dR}$ is the de Rham differential. If $a=x_1\wedge\dots\wedge x_i$,
then $d_{dR}x=\sum \pm d_{dR} x_i\, x_1\wedge\dots\hat{x_i}\dots\wedge x_n$.

Let us now start pulling the small  circle to unlink it from the big one.
That is, consider a family of cycles $c^t$
where $c^t$ is a family of circles in the solid torus such that
$c^0$ is the small circle, $c^1$ is a circle unlinked with the big circle
and only one circle in the family intersects the big one.  
Until the circles do not intersect, nothing happens and the cycle
$C_b\otimes c_a^t$ remains in the same class.
But,  as soon as they intersect each other, this class is changed by the class which is
a derivation of $b$. The calculation shows that for 
$b=d_{dR}x_0\, x_1\wedge\dots\wedge x_n$ it is given 
by the sum of maps (\ref{at}) contracted with $x_0$ and multiplied by 
$x_1\wedge\dots\wedge x_n$.
The reasoning is analogous to Proposition \ref{At}: unlinking influences only
around the intersection point.
When the small circle is unlinked from the big one,
$C_b\otimes c_a^1$  vanishes, because $c_a^1=[c^1]\otimes a$ is a boundary.

Note, that the $e_2$-algebra $\Ch(\g,\Sgd^{ad})$  is, in fact, a $fe_2$-algebra.
Thus, instead of the Lie bracket, one may calculate the operator $\Delta$
corresponding to the rotation. 
Given an element $x=\sum a_ib_i\in \Ch(\g,\Sgd^{ad})$, where $a_i$ are
in the odd part and $b_i$ in the even part, one may show, that
$$
\Delta x= \sum\{a_i, b_i\}.
$$
Apply the calculations from the previous paragraph to it.
Comparing it with Proposition \ref{At} we see, that the operator $\Delta$
on $\Ch(\g,\Sgd^{ad})$ coincides with the operator $d_0$ from there.
Proposition $\ref{char}$ implies that  the Duflo character
gives an isomorphism between this operator and $d_{Br}$.
In order to complete the proof, one has to verify
that $d_{Br}$ is the operator $\Delta$ for the $fe_2$-algebra $\Ch(\g, U_\g^{ad})$.
\end{proof}

While proving the proposition we found that the operator $\Delta$
on the $fe_2$-algebra $\int_{S^1} \Chq(\g)$ is equal to the first order deformation of the Hochschild differential
of $\Ch(\g)$ that we discussed in the first section.
I have no explanation for  this coincidence.

\subsection{Invariants of knots}

In \cite{W} we constructed invariants of manifolds using Weyl $n$-algebras.
Below we develop this idea for manifolds with embedded links.
Let us restrict ourselves to a $3$-sphere with a knot in it.

As it was observed in \cite{W}, the cohomology of the factorization complex
of the Weyl $n$-algebra $\We(V)$ on a closed $n$-manifold is one-dimensional.
If   $V$ lies in degree $1$ and the manifold is a $3$-sphere (or a homology sphere),
then the generator of this cohomology is given by the class $[p]\otimes S^{\mathrm{top}}V$,
where $p$ is a point in the manifold. As it was explained in \cite[Appendix]{W},
the factorization complex $\int_{S^3} \Chq(\g)$ is isomorphic to the complex of
the underlying Weyl $3$-algebra. Since the Chevalley--Eilenberg
differential is inner, one needs to consider here unbounded chains
that is, take direct product rather than the direct sum.
It is easy to see that the generator in the cohomology of $\int_{S^3} \Chq(\g)$
is given by $[p]\otimes S^{\mathrm{top}}\gd$. Call it the standard cycle.
The idea of invariants we construct is to produce another cycle and compare it
with the standard one.

Given a knot $K\colon S^1\hookrightarrow S^3$ and a class 
$f\in \int_{S^1}\Ch(\g)=\Ch(\g, \Sgd^{ad})$,  denote by 
$K_f$ the direct image of this class under $K$.
The class we are interested in is $([p]\otimes S^{\mathrm{top}}\gd)\otimes K_f$.
For dimensional reasons, only $f$ of degree 0 are interesting,
 in fact, $f\in \Sgd^{inv}$. Thus we get the following definition.  

\begin{definition}
For a knot $K$ in $\mathbb{R}^3$ the 
Wilson loop invariant	is the function on $\Sgd^{inv}$
given by
$$
f\mapsto ([\infty]\otimes S^{\mathrm{top}}(\gd[1]))\otimes K_f\in \int_{S^3}\Chq(\g),
$$
where we identify $\int_{S^3}\Chq(\g)$ with $\k[[h]]$ using the standard cycle as the generator.
\end{definition}

In \cite{W} it is  showed that invariants constructed  there are described by formulas
similar to formulas for the Axelrod--Singer invariants. Following the same line,
we see that the Wilson loop invariants are connected with Bott--Taubes invariants;
for a survey of the latter see e.~g. \cite{Vo}. 
There is another invariant of knots ---
the Kontsevich integral, see \cite[Part 3]{CD}.
In principle, it should coincide with the Bott--Taubes invariants, see \cite{Ko}. 
As far as I know, this point is not clear,
for discussion see \cite{Le}.  One may hope that the definition above 
will help to elucidate this.

Our construction of the Wilson loop invariant depends on the choice of a Lie algebra
with a scalar product. One
may give a more complicated, but universal definition of these invariants 
with values in the graph complex, which is the Chevalley--Eilenberg
complex of Hamiltonian vector fields, in the same way as it is outlined in \cite[Appendix]{W}. 

An interesting property of the Kontsevich integral is its value on the unknot: 
it is equal to the Duflo character and
this  allows to prove the Duflo isomorphism,
see \cite{BN} and \cite[Ch. 11]{CD}. 
The following proposition states that the Wilson loop invariant shares this property.

\begin{prop}
The Wilson loop invariant of the unknot is equal to the composition
$$
\Sgd^{inv}\hookrightarrow\Sgd \to U_\g \otimes \k[[h]] \to \k[[h]],
$$
where the second arrow is given by (\ref{chain}) and the third one is the standard augmentation.
\label{last}
\end{prop}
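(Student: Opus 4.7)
The plan is to combine the gluing rule \eqref{glue} for factorization complexes with Proposition \ref{superduflo}, exploiting the special fact that the unknot complement in $S^3$ is itself a solid torus.

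I would first decompose $S^3 = N(K) \cup_{T^2} (S^3 \setminus N(K))$, where $N(K) \cong S^1\times D^2$ is a tubular neighborhood of the unknot with core $K$, and $S^3 \setminus N(K) \cong D^2 \times S^1$ is the complementary solid torus, chosen so that $\infty$ lies in its interior. The gluing formula then gives
\begin{equation*}
\int_{S^3}\Chq(\g) \simeq \int_{N(K)}\Chq(\g) \tens{\int_{T^2}\Chq(\g)} \int_{S^3\setminus N(K)}\Chq(\g),
\end{equation*}
and homotopy invariance collapses each solid-torus factor to $\int_{S^1}\Chq(\g)$. After applying $\obl_1^2$ this is identified (as in Section 3) with $\Ch(\g,\Sgd^{ad})$, and under this identification $K_f$ is simply $f$ in the first factor, while $[\infty]\otimes S^{\mathrm{top}}\gd$ sits in the second.

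Next, I would recognize that the action of the $e_2$-algebra $\int_{S^1}\Chq(\g)$ coming from the left-hand tubular neighborhood is exactly the action provided by Construction \ref{construction} (with $n=3$, $k=1$) on $\obl^\infty_1\Ch(\g)$: the projection $\mathbb{R}^3\to\mathbb{R}^{\ge 0}\times\mathbb{R}$ onto (distance to the core circle, position along it) of the construction matches the solid-torus decomposition. By Proposition \ref{superduflo}, this action factors through the Hochschild cohomology complex $\Ch(\g,U_\g^{ad})\otimes\k[[h]]$ via a morphism whose restriction to $\Sgd \subset \Ch(\g,\Sgd^{ad})$ is homotopic to the composition \eqref{chain}. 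Thus $K_f$ is represented, up to homotopy, by the element $\mathrm{PBW}(\j^{\frac12}\cdot\exp(h\langle\cdot,\cdot\rangle)(f)) \in U_\g\otimes\k[[h]]$.

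Finally, I would identify the pairing with $[\infty]\otimes S^{\mathrm{top}}\gd$ coming from the complementary solid torus with the standard augmentation $U_\g\otimes\k[[h]]\to\k[[h]]$. The factor $S^{\mathrm{top}}\gd$ saturates the Chevalley--Eilenberg (exterior) part of $\Ch(\g,U_\g^{ad})$ for dimensional reasons, while the point class $[\infty]$, under the Hochschild identification of the complementary solid torus with the Hochschild complex of $U_\g$, realizes the counit; the normalization is determined by the choice of $[\infty]\otimes S^{\mathrm{top}}\gd$ as the generator of $\int_{S^3}\Chq(\g)\cong\k[[h]]$ from \cite[Proposition 11]{W}. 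The main obstacle is precisely this last identification: one must verify rigorously that the geometric pairing at $\infty$ coincides with the algebraic augmentation. Following the method hinted at in the sketch of Proposition \ref{superduflo}, I would do this by invoking the faithful action of $\int_{T^2}\Chq(\g)$ on $\int_{S^1}\Chq(\g)$ and reducing the check to the commutative subalgebra $\Ch(\g)\subset\Ch(\g,\Sgd^{ad})$, where the composition sends $1$ to $1 \in U_\g$, and the augmentation is recovered directly.
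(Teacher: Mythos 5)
Your proposal is correct and takes essentially the same route as the paper's own sketch: the same genus-one splitting of $S^3$ into two solid tori with the unknot as the core of one and $\infty$ inside the other, the gluing rule (\ref{glue}), the Duflo twisting supplied by Proposition \ref{superduflo}, and normalization against the standard cycle. The only difference is presentational: the paper packages the final pairing as a derived tensor product of two transversal holonomic modules over the differential-operator algebra $\int_{S^1\times S^1}\Chq(\g)$, which your direct identification of the pairing at $\infty$ with the augmentation (justified via the faithful $\int_{S^1\times S^1}\Chq(\g)$-action) paraphrases at the same level of rigor.
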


\begin{proof}[Sketch of proof]
As it was discussed in Subsection \ref{Chq}, the $e_2$-algebra $\int_{S^1}\Chq(\g)$
acts on the $e_1$-algebra $\Chq(\g)$. In Proposition \ref{superduflo}
it is shown that this action is not ``naive'', the morphism to
the $\End$-object is the composition of the pairing and the Duflo character.
As it was mentioned above, this action is compatible with taking the factorization complex:
as $\int_{S^1}\Chq(\g)$ acts $\Chq(\g)$ so
$\int_{S^1\times S^1}\Chq(\g)$
acts on  $\int_{S^1} \Chq(\g)$. 
The $e_1$-algebra $\int_{S^1\times S^1}\Chq(\g)$
is the algebra of differential operators on $\Chq(\g, \Sgd^{ad})$,
see also Example \ref{diff}.
The complex $\int_{S^1} \Chq(\g)$ is a kind of a holonomic module
over these differential operators.
But, again, it is not ``naive'', this action is twisted by the Duflo character.

Cut $S^3$ in two solid tori in the standard way, being the infinity point inside one of them
and the unknot is the middle circle of the other.
Now apply (\ref{glue}) to calculate the Wilson loop invariant of the unknot.
As it was mentioned, $\int_{S^1\times S^1}\Chq(\g)$
is the algebra of differential operators and the factorization complexes 
of solid tori are kind of holonomic modules with transversal characteristic varieties.
Now, the calculation of the Wilson invariant is reduced to taking
the derived tensor product of these modules and comparing
cycles in the result given by  different $f\in \Sgd^{inv}$.
Taking into account the Duflo twisting we get the result.
\end{proof}

There is another natural approach to the knot invariants mentioned in
\cite{AFT}.  Given a knot in a closed manifold, one may
cut out a small solid torus around it to get a manifold with boundary.
Then factorization complex for a $fe_3$-algebra of this manifold is a module over
the factorization complex of the boundary torus, which is an invariant of the knot.

The proof of the above proposition makes clear what  happens
when the $fe_3$-algebra is $\Chq(\g)$.
In this case, the pair consisting of an algebra and a  module itself does not depend on the knot;
they are, essentially, the algebra of differential operators and the standard holonomic module over it.
But this module contains a chosen element (it is a $e_0$-algebra!),
which is the image of the unit. And the module together with this element is the invariant of
the knot. Reasoning analogous to the proof of Proposition \ref{last}
shows that this invariant is, essentially, equivalent to the Wilson loop invariant.

\subsection{Skein algebra}

It \cite{Tu} for a Riemann surface $S$ a skein algebra was introduced.
It is generated by non self-intersecting loops on $S$.
We claim that there is a map from this algebra to
$\int_{S\times I}\Chq(\g)$. An element corresponding to a loop $L$
maps to $L_\eta$, where $\eta\in\int_{S^1} \Chq(\g)$ is the canonical element,
which is the preimage of $\exp(hc)$ under (\ref{chain}),
where $c$ is the Casimir element given by the scalar product.

The reason to propose it is the following.
The skein algebra  is the quantization of a Poisson algebra.
The latter appears in \cite{Go,Wo} as a subalgebra of the Poisson algebra
of functions on the moduli space of $G$-local systems on $S$.
But $\int_{S\times I}\Chq(\g)$ must be thought of as the quantization
of the latter Poisson algebra, see \cite{CPTVV}.

Elements $L_\eta$ play an important role since they are generating functions of Dehn twists.
In other words, the cobordism corresponding to the Dehn twist
gives a bimodule over $\int_{S\times I}\Chq(\g)$,  according to 
speculations in the end of the previous subsection.
Then the element $L_\eta$, corresponding to the Dehn twist,
gives us  the characteristic function of this module.
This allows us to reduce the calculation of the perturbative quantum invariants of manifolds to
the Wilson loop invariant of links similarly  as it was done e.~g. in \cite{RT}.

We hope to elaborate all of this elsewhere.

\bibliographystyle{alphanum}
\bibliography{kon}

\newcommand{\etalchar}[1]{$^{#1}$}
\begin{thebibliography}{CPT{\etalchar{+}}}

\bibitem[AFT]{AFT}
David Ayala, John Francis, and Hiro~Lee Tanaka.
\newblock Factorization homology of stratified spaces.
\newblock arXiv:1409.0848 [math.AT].

\bibitem[AM]{AM}
Anton Alekseev and Eckhard Meinrenken.
\newblock The non-commutative {W}eil algebra.
\newblock {\em Invent. Math.}, 139(1):135--172, 2000.

\bibitem[ASZK]{AKSZ}
Mikhail Alexandrov, Albert Schwarz, Oleg Zaboronsky, and Maxim Kontsevich.
\newblock The geometry of the master equation and topological quantum field
  theory.
\newblock {\em Internat. J. Modern Phys. A}, 12(7):1405--1429, 1997.

\bibitem[BNLT]{BN}
Dror Bar-Natan, Thang T.~Q. Le, and Dylan~P. Thurston.
\newblock Two applications of elementary knot theory to {L}ie algebras and
  {V}assiliev invariants.
\newblock {\em Geom. Topol.}, 7:1--31 (electronic), 2003.

\bibitem[Bry]{Br}
Jean-Luc Brylinski.
\newblock A differential complex for {P}oisson manifolds.
\newblock {\em J. Differential Geom.}, 28(1):93--114, 1988.

\bibitem[CDM]{CD}
Sergei Chmutov, Sergei Duzhin, and Jacob Mostovoy.
\newblock {\em Introduction to {V}assiliev knot invariants}.
\newblock Cambridge University Press, Cambridge, 2012.

\bibitem[CPT{\etalchar{+}}]{CPTVV}
Damien Calaque, Tony Pantev, Bertrand Toen, Michel Vaquie, and Gabriele
  Vezzosi.
\newblock Shifted poisson structures and deformation quantization.
\newblock arXiv:1506.03699 [math.AG].

\bibitem[Duf]{Du}
Michel Duflo.
\newblock Op\'erateurs diff\'erentiels bi-invariants sur un groupe de {L}ie.
\newblock {\em Ann. Sci. \'Ecole Norm. Sup. (4)}, 10(2):265--288, 1977.

\bibitem[Gin]{G}
Gr\'egory Ginot.
\newblock Notes on factorization algebras, factorization homology and
  applications.
\newblock arXiv:1307.5213 [math.AT].

\bibitem[GM]{GM}
Sergueï~Izrailevitch Gelfand and Yuri~Ivanovich Manin.
\newblock {\em Methods of homological algebra}.
\newblock Springer monographs in mathematics. Springer, Berlin, New York,
  Paris, 2003.

\bibitem[Gol]{Go}
William~M. Goldman.
\newblock Invariant functions on {L}ie groups and {H}amiltonian flows of
  surface group representations.
\newblock {\em Invent. Math.}, 85(2):263--302, 1986.

\bibitem[Kon]{Ko}
Maxim Kontsevich.
\newblock Feynman diagrams and low-dimensional topology.
\newblock In {\em First {E}uropean {C}ongress of {M}athematics, {V}ol.\ {II}
  ({P}aris, 1992)}, volume 120 of {\em Progr. Math.}, pages 97--121.
  Birkh\"auser, Basel, 1994.

\bibitem[Kri]{Kr}
Andrew Kricker.
\newblock Noncommutative {C}hern-{W}eil theory and the combinatorics of
  wheeling.
\newblock {\em Duke Math. J.}, 157(2):223--281, 2011.

\bibitem[Les]{Le}
Christine Lescop.
\newblock About the uniqueness of the {K}ontsevich integral.
\newblock {\em J. Knot Theory Ramifications}, 11(5):759--780, 2002.

\bibitem[Lod]{L}
Jean-Louis Loday.
\newblock {\em Cyclic homology}, volume 301 of {\em Grundlehren der
  Mathematischen Wissenschaften [Fundamental Principles of Mathematical
  Sciences]}.
\newblock Springer-Verlag, Berlin, second edition, 1998.

\bibitem[Lur]{Lu}
Jacob Lurie.
\newblock {H}igher {A}lgebra.
\newblock \url{http://www.math.harvard.edu/~lurie/papers/HigherAlgebra.pdf}.

\bibitem[Mar1]{W}
Nikita Markarian.
\newblock Weyl n-algebras.
\newblock arXiv:1504.01931 [math.QA].

\bibitem[Mar2]{M1}
Nikita Markarian.
\newblock The {A}tiyah class, {H}ochschild cohomology and the {R}iemann-{R}och
  theorem.
\newblock {\em J. Lond. Math. Soc. (2)}, 79(1):129--143, 2009.

\bibitem[NT]{NT}
Ryszard Nest and Boris Tsygan.
\newblock On the cohomology ring of an algebra.
\newblock In {\em Advances in geometry}, volume 172 of {\em Progr. Math.},
  pages 337--370. Birkh\"auser Boston, Boston, MA, 1999.

\bibitem[Reu]{R}
Christophe Reutenauer.
\newblock {\em Free {L}ie algebras}, volume~7 of {\em London Mathematical
  Society Monographs. New Series}.
\newblock The Clarendon Press, Oxford University Press, New York, 1993.
\newblock Oxford Science Publications.

\bibitem[RT]{RT}
Nikolai Reshetikhin and Vladimir~G. Turaev.
\newblock Invariants of {$3$}-manifolds via link polynomials and quantum
  groups.
\newblock {\em Invent. Math.}, 103(3):547--597, 1991.

\bibitem[SW]{SW}
Paolo Salvatore and Nathalie Wahl.
\newblock Framed discs operads and {B}atalin-{V}ilkovisky algebras.
\newblock {\em Q. J. Math.}, 54(2):213--231, 2003.

\bibitem[Tur]{Tu}
Vladimir~G. Turaev.
\newblock Skein quantization of {P}oisson algebras of loops on surfaces.
\newblock {\em Ann. Sci. \'Ecole Norm. Sup. (4)}, 24(6):635--704, 1991.

\bibitem[Vol]{Vo}
Ismar Voli{\'c}.
\newblock A survey of {B}ott-{T}aubes integration.
\newblock {\em J. Knot Theory Ramifications}, 16(1):1--42, 2007.

\bibitem[Vor]{V}
Alexander~A. Voronov.
\newblock The {S}wiss-cheese operad.
\newblock In {\em Homotopy invariant algebraic structures ({B}altimore, {MD},
  1998)}, volume 239 of {\em Contemp. Math.}, pages 365--373. Amer. Math. Soc.,
  Providence, RI, 1999.

\bibitem[Wol]{Wo}
Scott Wolpert.
\newblock On the symplectic geometry of deformations of a hyperbolic surface.
\newblock {\em Ann. of Math. (2)}, 117(2):207--234, 1983.

\end{thebibliography}
\end{document}